\begin{document}

\title*{A symmetric quantum calculus\thanks{Submitted 26/Sept/2011; 
accepted in revised form 28/Dec/2011; to Proceedings of International 
Conference on Differential \& Difference Equations and Applications,
in honour of Professor Ravi P. Agarwal, to be published by Springer
in the series \emph{Proceedings in Mathematics} (PROM).}}

\author{Artur M. C. Brito da Cruz, Nat\'{a}lia Martins and Delfim F. M. Torres}

\authorrunning{A. M. C. Brito da Cruz, N. Martins and D. F. M. Torres}

\institute{Artur M. C. Brito da Cruz$^{1, 2}$\\
\email{artur.cruz@estsetubal.ips.pt}\\[0.2cm]
Nat\'{a}lia Martins$^{2}$\\
\email{natalia@ua.pt}\\[0.2cm]
Delfim F. M. Torres$^{2}$\\
\email{delfim@ua.pt}\\[0.3cm]
$^1$Escola Superior de Tecnologia de Set\'{u}bal, Estefanilha, 2910-761 Set\'{u}bal, Portugal\\[0.2cm]
$^2$Center for Research and Development in Mathematics and Applications\\
Department of Mathematics, University of Aveiro, 3810-193 Aveiro, Portugal}

\maketitle


\abstract{We introduce the $\alpha,\beta$-symmetric difference derivative and the
$\alpha,\beta$-symmetric N\"{o}rlund sum. The associated symmetric quantum calculus
is developed, which can be seen as a generalization
of the forward and backward $h$-calculus.}


\section{Introduction}

Quantum derivatives and integrals play a leading role
in the understanding of complex physical systems.
The subject has been under strong development
since the beginning of the 20th century
\cite{MyID:111,Ernst,Jackson,Kac,Milne}.
Roughly speaking, two approaches to quantum calculus are available.
The first considers the set of points of study to be the lattice
$q^{\mathbb{Z}}$ or $h\mathbb{Z}$ and is nowadays
part of the more general time scale calculus
\cite{Agarwal:surv,BohnerDEOTS,Malinowska};
the second uses the same formulas for the quantum
derivatives but the set of study is the set $\mathbb{R}$ of real numbers
\cite{MyID:188,withMiguel01,MalinowskaTorres}.
Here we take the second perspective.

Given a function $f$ and a positive real number $h$,
the $h$-derivative of $f$ is defined by the ratio
$\left(f\left(x+h\right)-f\left(x\right)\right)/h$.
When $h\rightarrow0$, one obtains the usual
derivative of the function $f$. The symmetric $h$-derivative is defined by
$\left(f\left(  x+h\right)  -f\left(  x-h\right)\right)/(2h)$,
which coincides with the standard symmetric derivative \cite{Thomsom}
when we let $h\rightarrow 0$.

We introduce the $\alpha,\beta$-symmetric difference derivative
and N\"{o}rlund sum, and then develop the associated calculus.
Such an $\alpha,\beta$-symmetric calculus gives
a generalization to (both forward and backward) quantum $h$-calculus.

The text is organized as follows. In Section~\ref{sec:2} we recall
the basic definitions of the quantum $h$-calculus, including
the N\"{o}rlund sum, \textrm{i.e.}, the inverse operation
of the $h$-derivative. Our results are then given in Section~\ref{sec:3}:
in \S\ref{sec:3.1} we define and prove the properties of
the $\alpha,\beta$-symmetric derivative;
in \S\ref{sec:3.2} we define the $\alpha,\beta$-symmetric N\"{o}rlund sum;
and \S\ref{sec:3.3} is dedicated to mean value theorems for the
$\alpha,\beta$-symmetric calculus: we prove $\alpha,\beta$-symmetric
versions of Fermat's theorem for stationary points,
Rolle's, Lagrange's, and Cauchy's mean value theorems.


\section{Preliminaries}
\label{sec:2}

In what follows we denote by $\left\vert I\right\vert $
the measure of the interval $I$.

\begin{definition}
Let $\alpha$ and $\beta$ be two positive real numbers,
$I\subseteq\mathbb{R}$ be an interval
with $\left\vert I\right\vert >\alpha$,
and $f:I\rightarrow\mathbb{R}$.
The $\alpha$-forward difference
operator $\Delta_{\alpha}$ is defined by
\[
\Delta_{\alpha}\left[  f\right]  \left(  t\right)  :=\frac{f\left(
t+\alpha\right)  -f\left(  t\right)  }{\alpha}
\]
for all $t\in I\backslash\left[\sup I-\alpha,\sup I\right]$,
in case $\sup I$ is finite, or, otherwise, for all $t \in I$.
Similarly, for $\left\vert I\right\vert >\beta$
the $\beta$-backward difference operator $\nabla_{\beta}$ is defined by
\[
\nabla_{\beta}\left[  f\right]  \left(  t\right)  :=\frac{f\left(  t\right)
-f\left(t-\beta\right)}{\beta}
\]
for all $t\in I\backslash\left[  \inf I,\inf I+\beta\right]$,
in case $\inf I$ is finite, or, otherwise, for all $t\in I$.
We call to $\Delta_{\alpha}\left[  f\right]$ the $\alpha$-forward
difference derivative of $f$ and to $\nabla_{\beta}\left[f\right]$
the $\beta$-backward difference derivative of $f$.
\end{definition}

\begin{definition}
\label{def:o1}
Let $I \subseteq \mathbb{R}$ be such that
$a,b\in I$ with $a<b$ and $\sup I=+\infty$.
For $f:I\rightarrow\mathbb{R}$ we define the N\"{o}rlund sum
(the $\alpha$-forward integral) of $f$ from $a$ to $b$ by
\[
\int_{a}^{b}f\left(  t\right)  \Delta_{\alpha}t=\int_{a}^{+\infty}f\left(
t\right)  \Delta_{\alpha}t-\int_{b}^{+\infty}f\left(  t\right)
\Delta_{\alpha}t,
\]
where
\[
\int_{x}^{+\infty}f\left(  t\right)  \Delta_{\alpha}t=\alpha\sum
_{k=0}^{+\infty}f\left(  x+k\alpha\right),
\]
provided the series converges at $x=a$ and $x=b$. In that case, $f$ is
said to be $\alpha$-forward integrable on $\left[  a,b\right]$. We say that $f$
is $\alpha$-forward integrable over $I$ if it is $\alpha$-forward integrable
for all $a,b\in I$.
\end{definition}

\begin{remark}
If $f:I\rightarrow\mathbb{R}$
is a function such that $\sup I<+\infty$,
then we can easily extend $f$ to $\tilde{f}:\tilde{I}\rightarrow\mathbb{R}$
with $\sup\tilde{I}=+\infty$ by letting
$\tilde{f}|_{I}=f$ and $\tilde{f}|_{\tilde{I}\backslash I}=0$.
\end{remark}

\begin{remark}
Definition~\ref{def:o1} is valid for any two real points $a,b$
and not only for points belonging to $\alpha\mathbb{Z}$.
This is in contrast with the theory of time scales \cite{Agarwal:surv,BohnerDEOTS}.
\end{remark}

Similarly, one can introduce the $\beta$-backward integral.

\begin{definition}
\label{beta}
Let $I$ be an interval of $\mathbb{R}$
such that $a,b\in I$ with $a<b$ and $\inf I=-\infty$. For $f:I\rightarrow\mathbb{R}$
we define the $\beta$-backward integral of $f$ from $a$ to $b$ by
\[
\int_{a}^{b}f\left(  t\right)  \nabla_{\beta}t=\int_{-\infty}^{b}f\left(
t\right)  \nabla_{\beta}t-\int_{-\infty}^{a}f\left(  t\right)  \nabla_{\beta}t,
\]
where
\[
\int_{-\infty}^{x}f\left(  t\right)  \nabla_{\beta}t
=\beta\sum_{k=0}^{+\infty} f\left(  x-k\beta\right),
\]
provided the series converges at $x=a$ and $x=b$. In that case, $f$ is
said to be $\beta$-backward integrable on $\left[  a,b\right]$. We say that $f$
is $\beta$-backward integrable over $I$ if it is $\beta$-backward integrable
for all $a,b\in I$.
\end{definition}

The $\beta$-backward N\"{o}rlund sum has similar results and properties
as the $\alpha$-forward N\"{o}rlund sum.


\section{Main Results}
\label{sec:3}

We begin by introducing in \S\ref{sec:3.1}
the $\alpha,\beta$-symmetric derivative;
in \S\ref{sec:3.2} we define
the $\alpha,\beta$-symmetric N\"{o}rlund sum;
while \S\ref{sec:3.3} is dedicated to mean value theorems
for the new $\alpha,\beta$-symmetric calculus.


\subsection{The $\alpha,\beta$-Symmetric Derivative}
\label{sec:3.1}

In what follows, $\alpha,\beta\in\mathbb{R}_{0}^{+}$
with at least one of them positive and $I$ is an interval such that
$\left\vert I\right\vert >\max\left\{  \alpha,\beta\right\}$.
We denote by $I_{\beta}^{\alpha}$ the set
\[
I_{\beta}^{\alpha}=\left\{
\begin{array}
[c]{ccc}
I\backslash\left(  \left[  \inf I,\inf I+\beta\right]  \cup\left[  \sup
I-\alpha,\sup I\right]  \right) & \text{if} & \inf I\neq-\infty \wedge \sup I\neq+\infty\\
I\backslash\left(  \left[  \inf I,\inf I+\beta\right]  \right)  & \text{if}
& \inf I\neq-\infty\wedge\sup I=+\infty\\
I\backslash\left(  \left[  \sup I-\alpha,\sup I\right]  \right)  & \text{if}
& \inf I=-\infty\wedge\sup I\neq+\infty\\
I & \text{if} & \inf I=-\infty\wedge\sup I=+\infty.
\end{array}
\right.
\]

\begin{definition}
\label{def:s:ab:dd}
The $\alpha,\beta$-symmetric difference derivative
of $f:I\rightarrow\mathbb{R}$ is given by
$$
D_{\alpha,\beta}\left[  f\right]  \left(  t\right)  =\frac{f\left(
t+\alpha\right)  -f\left(  t-\beta\right)}{\alpha+\beta}
$$
for all $t\in I_{\beta}^{\alpha}$.
\end{definition}

\begin{remark}
The $\alpha,\beta$-symmetric difference operator
is a generalization of both the $\alpha$-forward
and the $\beta$-backward difference operators.
Indeed, the $\alpha$-forward difference is obtained
for $\alpha>0$ and $\beta=0$; while for $\alpha=0$ and $\beta>0$
we obtain the $\beta$-backward difference operator.
\end{remark}

\begin{remark}
The classical symmetric derivative \cite{Thomsom}
is obtained by choosing $\beta = \alpha$ and taking the limit
$\alpha \rightarrow 0$. When $\alpha=\beta=h > 0$,  
the $\alpha,\beta$-symmetric difference operator
is called the $h$-symmetric derivative.
\end{remark}

\begin{remark}
\label{rem:lc:der}
If $\alpha,\beta\in\mathbb{R}^{+}$, then
$D_{\alpha,\beta}\left[  f\right]\left(t\right)
=\frac{\alpha}{\alpha+\beta}\Delta_{\alpha}\left[  f\right]\left(t\right)
+\frac{\beta}{\alpha+\beta}\nabla_{\beta}\left[  f\right]\left(t\right)$,
where $\Delta_{\alpha}$ and $\nabla_{\beta}$ are, respectively,
the $\alpha$-forward and the $\beta$-backward differences.
\end{remark}

The symmetric difference operator has the following properties:

\begin{theorem}
Let $f,g:I\rightarrow\mathbb{R}$ and $c,\lambda\in\mathbb{R}$.
For all $t\in I_{\beta}^{\alpha}$ one has:

\begin{enumerate}
\item $D_{\alpha,\beta}\left[  c\right]  \left(  t\right)  =0$;

\item $D_{\alpha,\beta}\left[  f+g\right]  \left(  t\right)=D_{\alpha,\beta
}\left[  f\right]  \left(  t\right)  +D_{\alpha,\beta}\left[  g\right]
\left(  t\right)  $;

\item $D_{\alpha,\beta}\left[  \lambda f\right]  \left(  t\right)  =\lambda
D_{\alpha,\beta}\left[  f\right]  \left(  t\right)  $;

\item $D_{\alpha,\beta}\left[  fg\right]  \left(  t\right)  =D_{\alpha,\beta
}\left[  f\right]  \left(  t\right)  g\left(  t+\alpha\right)  +f\left(
t-\beta\right)  D_{\alpha,\beta}\left[  g\right]  \left(  t\right)  $;

\item $D_{\alpha,\beta}\left[  fg\right]  \left(  t\right)  =D_{\alpha,\beta
}\left[  f\right]  \left(  t\right)  g\left(  t-\beta\right)  +f\left(
t+\alpha\right)  D_{\alpha,\beta}\left[  g\right]  \left(  t\right)  $;

\item $\displaystyle D_{\alpha,\beta}\left[  \frac{f}{g}\right]  \left(
t\right)  =\frac{D_{\alpha,\beta}\left[  f\right]  \left(  t\right)  g\left(
t-\beta\right)  -f\left(  t-\beta\right)  D_{\alpha,\beta}\left[  g\right]
\left(  t\right)  }{g\left(  t+\alpha\right)  g\left(  t-\beta\right)  }$
\newline provided $g\left(  t+\alpha\right)  g\left(  t-\beta\right)
\neq0$;

\item $\displaystyle D_{\alpha,\beta}\left[  \frac{f}{g}\right]  \left(
t\right)  =\frac{D_{\alpha,\beta}\left[  f\right]  \left(  t\right)  g\left(
t+\alpha\right)  -f\left(  t+\alpha\right)  D_{\alpha,\beta}\left[  g\right]
\left(  t\right)  }{g\left(  t+\alpha\right)  g\left(  t-\beta\right)  }$
\newline provided $g\left(  t+\alpha\right)  g\left(  t-\beta\right)
\neq0$.
\end{enumerate}
\end{theorem}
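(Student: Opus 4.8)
The plan is to prove each of the seven identities directly from Definition~\ref{def:s:ab:dd}, which is the only tool really needed; no limiting arguments or deep results are required, since every claim is an algebraic identity about the difference quotient $\bigl(f(t+\alpha)-f(t-\beta)\bigr)/(\alpha+\beta)$ evaluated at a point $t\in I_\beta^\alpha$ (so that $t+\alpha$ and $t-\beta$ both lie in $I$ and the formula makes sense).

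For items (1)--(3) I would simply substitute: for the constant function $c$ the numerator $c-c$ vanishes, giving (1); for $f+g$ and $\lambda f$ the numerator splits or factors in the obvious way, giving (2) and (3). The only mildly substantive steps are the product rules (4) and (5). For (4) I would start from $D_{\alpha,\beta}[fg](t)=\bigl(f(t+\alpha)g(t+\alpha)-f(t-\beta)g(t-\beta)\bigr)/(\alpha+\beta)$ and insert the telescoping term $\pm f(t-\beta)g(t+\alpha)$ in the numerator; regrouping gives $\bigl(f(t+\alpha)-f(t-\beta)\bigr)g(t+\alpha)+f(t-\beta)\bigl(g(t+\alpha)-g(t-\beta)\bigr)$ over $\alpha+\beta$, which is exactly the right-hand side of (4). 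For (5) I would instead insert $\pm f(t+\alpha)g(t-\beta)$, obtaining the symmetric expression; alternatively (5) follows from (4) by interchanging the roles of $f$ and $g$ and using commutativity of multiplication, and I would probably just remark on that rather than redo the computation.

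For the quotient rules (6) and (7), assume $g(t+\alpha)g(t-\beta)\neq0$ so that $f/g$ is defined at both $t+\alpha$ and $t-\beta$. I would compute
\[
D_{\alpha,\beta}\!\left[\frac{f}{g}\right]\!(t)
=\frac{1}{\alpha+\beta}\left(\frac{f(t+\alpha)}{g(t+\alpha)}-\frac{f(t-\beta)}{g(t-\beta)}\right)
=\frac{f(t+\alpha)g(t-\beta)-f(t-\beta)g(t+\alpha)}{(\alpha+\beta)\,g(t+\alpha)g(t-\beta)},
\]
and then massage the numerator by adding and subtracting $f(t-\beta)g(t-\beta)$ to produce $\bigl(f(t+\alpha)-f(t-\beta)\bigr)g(t-\beta)-f(t-\beta)\bigl(g(t+\alpha)-g(t-\beta)\bigr)$, which divided by $\alpha+\beta$ yields $D_{\alpha,\beta}[f](t)g(t-\beta)-f(t-\beta)D_{\alpha,\beta}[g](t)$; this gives (6). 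For (7) I would instead add and subtract $f(t+\alpha)g(t+\alpha)$ in the numerator, or again invoke the $f\leftrightarrow g$ style symmetry. A cleaner route for both (6) and (7) is to apply the already-proved product rules to the identity $f=(f/g)\cdot g$: from (4) one solves for $D_{\alpha,\beta}[f/g](t)$ and simplifies using (4)/(5) again, and similarly from (5) for the other form.

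There is no genuine obstacle here — the computations are routine — so the only thing to be careful about is bookkeeping: making sure at each step that the point $t$ stays in $I_\beta^\alpha$ (guaranteed by the hypothesis on $|I|$ and the definition of $I_\beta^\alpha$), that the evaluation points $t+\alpha$ and $t-\beta$ are the ones appearing, and that in the quotient rules the nonvanishing hypothesis is exactly what is needed for every denominator that appears. I would present (1)--(5) with short explicit one-line computations and then derive (6) and (7) either directly by the add-and-subtract trick or as consequences of (4) and (5), whichever is shorter to typeset.
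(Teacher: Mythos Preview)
Your proposal is correct and follows essentially the same approach as the paper: direct computation from the definition for (1)--(4), and symmetry (interchanging $f$ and $g$) for (5). For (6) and (7) the paper first computes $D_{\alpha,\beta}[1/g](t)=-D_{\alpha,\beta}[g](t)/\bigl(g(t+\alpha)g(t-\beta)\bigr)$ and then applies the product rules to $f\cdot(1/g)$, whereas you propose the direct add-and-subtract in the numerator (or solving from $f=(f/g)\cdot g$); these are equivalent elementary manipulations and either is fine.
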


\begin{proof}
Property 1 is a trivial consequence of Definition~\ref{def:s:ab:dd}.
Properties 2, 3 and 4 follow by direct computations:
\begin{align*}
D_{\alpha,\beta}\left[  f+g\right]  \left(  t\right)   & =\frac{\left(
f+g\right)  \left(  t+\alpha\right)  -\left(  f+g\right)  \left(
t-\beta\right)  }{\alpha+\beta}\\
& =\frac{f\left(  t+\alpha\right)  -f\left(  t-\beta\right)  }{\alpha+\beta
}+\frac{g\left(  t+\alpha\right)  -g\left(  t-\beta\right)  }{\alpha+\beta}\\
& =D_{\alpha,\beta}\left[  f\right]  \left(  t\right)  +D_{\alpha,\beta
}\left[  g\right]  \left(  t\right);
\end{align*}
\begin{align*}
D_{\alpha,\beta}\left[  \lambda f\right]  \left(  t\right)   & =\frac{\left(
\lambda f\right)  \left(  t+\alpha\right)  -\left(  \lambda f\right)  \left(
t-\beta\right)  }{\alpha+\beta}\\
&=\lambda\frac{f\left(  t+\alpha\right)  -f\left(  t-\beta\right)}{\alpha+\beta}\\
& =\lambda D_{\alpha,\beta}\left[  f\right]  \left(  t\right);
\end{align*}
\begin{align*}
D_{\alpha,\beta}\left[  fg\right]  \left(  t\right)   & =\frac{\left(
fg\right)  \left(  t+\alpha\right)  -\left(  fg\right)  \left(  t-\beta
\right)  }{\alpha+\beta}\\
& =\frac{f\left(  t+\alpha\right)  g\left(  t+\alpha\right)  -f\left(
t-\beta\right)  g\left(  t-\beta\right)  }{\alpha+\beta}\\
& =\frac{f\left(  t+\alpha\right)  -f\left(  t-\beta\right)  }{\alpha+\beta
}g\left(  t+\alpha\right)
+\frac{g\left(  t+\alpha\right)  -g\left(  t-\beta\right)  }{\alpha+\beta
}f\left(  t-\beta\right) \\
& =D_{\alpha,\beta}\left[  f\right]  \left(  t\right)  g\left(  t+\alpha
\right)  +f\left(  t-\beta\right)  D_{\alpha,\beta}\left[  g\right]  \left(
t\right).
\end{align*}
Equality 5 is obtained from 4 interchanging the role of $f$ and $g$.
To prove 6 we begin by noting that
\begin{align*}
D_{\alpha,\beta}\left[  \frac{1}{g}\right]  \left(  t\right)   & =\frac
{\frac{1}{g}\left(  t+\alpha\right)  -\frac{1}{g}\left(  t-\beta\right)
}{\alpha+\beta}
=\frac{\frac{1}{g\left(  t+\alpha\right)  }-\frac{1}{g\left(  t-\beta
\right)  }}{\alpha+\beta}\\
& =\frac{g\left(  t-\beta\right)  -g\left(  t+\alpha\right)  }{\left(
\alpha+\beta\right)  g\left(  t+\alpha\right)  g\left(  t-\beta\right)}
=-\frac{D_{\alpha,\beta}\left[  g\right]  \left(  t\right)  }{g\left(
t+\alpha\right)  g\left(  t-\beta\right)  }\text{.}
\end{align*}
Hence,
\begin{align*}
D_{\alpha,\beta}\left[  \frac{f}{g}\right]  \left(  t\right)
&=D_{\alpha,\beta}\left[  f\frac{1}{g}\right]  \left(  t\right)
=D_{\alpha,\beta}\left[  f\right]  \left(  t\right)  \frac{1}{g}\left(
t+\alpha\right)  +f\left(  t-\beta\right)  D_{\alpha,\beta}\left[  \frac{1}
{g}\right]  \left(  t\right) \\
& =\frac{D_{\alpha,\beta}\left[  f\right]  \left(  t\right)  }{g\left(
t+\alpha\right)  }-f\left(  t-\beta\right)  \frac{D_{\alpha,\beta}\left[
g\right]  \left(  t\right)  }{g\left(  t+\alpha\right)  g\left(
t-\beta\right)  }\\
& =\frac{D_{\alpha,\beta}\left[  f\right]  \left(  t\right)  g\left(
t-\beta\right)  -f\left(  t-\beta\right)  D_{\alpha,\beta}\left[  g\right]
\left(  t\right)  }{g\left(  t+\alpha\right)  g\left(  t-\beta\right)  }.
\end{align*}
Equality 7 follows from simple calculations:
\begin{align*}
D_{\alpha,\beta}\left[  \frac{f}{g}\right]  \left(  t\right)
&=D_{\alpha,\beta}\left[  f\frac{1}{g}\right]  \left(  t\right)
=D_{\alpha,\beta}\left[  f\right]  \left(  t\right)  \frac{1}{g}\left(
t-\beta\right)  +f\left(  t+\alpha\right)  D_{\alpha,\beta}\left[  \frac{1}
{g}\right]  \left(  t\right) \\
& =\frac{D_{\alpha,\beta}\left[  f\right]  \left(  t\right)  }{g\left(
t-\beta\right)  }-f\left(  t+\alpha\right)  \frac{D_{\alpha,\beta}\left[
g\right]  \left(  t\right)  }{g\left(  t+\alpha\right)  g\left(
t-\beta\right)  }\\
& =\frac{D_{\alpha,\beta}\left[  f\right]  \left(  t\right)  g\left(
t+\alpha\right)  -f\left(  t+\alpha\right)  D_{\alpha,\beta}\left[  g\right]
\left(  t\right)  }{g\left(  t+\alpha\right)  g\left(  t-\beta\right)  }.
\end{align*}
\end{proof}


\subsection{The $\alpha,\beta$-Symmetric N\"{o}rlund Sum}
\label{sec:3.2}

Having in mind Remark~\ref{rem:lc:der},
we define the $\alpha,\beta$-symmetric integral
as a linear combination of the $\alpha$-forward and the
$\beta$-backward integrals.

\begin{definition}
Let $f:\mathbb{R}\rightarrow\mathbb{R}$ and $a,b\in\mathbb{R}$, $a<b$.
If $f$ is $\alpha$-forward and $\beta$-backward integrable
on $\left[  a,b\right]$, then we define the
$\alpha,\beta$-symmetric integral of $f$ from $a$ to $b$ by
\[
\int_{a}^{b}f\left(  t\right)  d_{\alpha,\beta}t=\frac{\alpha}{\alpha+\beta
}\int_{a}^{b}f\left(  t\right)  \Delta_{\alpha}t+\frac{\beta}{\alpha+\beta
}\int_{a}^{b}f\left(  t\right)  \nabla_{\beta}t\text{.}
\]
The function $f$ is $\alpha,\beta$-symmetric integrable if it is
$\alpha,\beta$-symmetric integrable for all $a,b\in\mathbb{R}$.
\end{definition}

\begin{remark}
Note that if $ \alpha\in\mathbb{R}^{+}$ and $\beta=0$,
then $\displaystyle\int_{a}^{b}f\left(  t\right)
d_{\alpha,\beta}t=\int_{a}^{b}f\left(  t\right)  \Delta_{\alpha}t$;
if $\alpha=0$ and $\beta\in\mathbb{R}^{+}$, then
$\displaystyle\int_{a}^{b}f\left(  t\right)  d_{\alpha,\beta}t
=\int_{a}^{b}f\left(  t\right)  \nabla_{\beta}t$.
\end{remark}

The properties of the $\alpha,\beta$-symmetric integral
follow from the corresponding $\alpha$-forward
and $\beta$-backward integral properties.
It should be noted, however, that the equality
$D_{\alpha,\beta}\left[s \mapsto  \int_{a}^{s}f\left(\tau\right) d_{\alpha,\beta}
\tau\right](t) =f\left(  t\right)$
is not always true in the $\alpha,\beta$-symmetric calculus,
despite both forward and backward integrals satisfy the
corresponding fundamental theorem of calculus. Indeed, let
$f\left(  t\right)
=\displaystyle
\begin{cases}
\frac{1}{2^{t}} & \text{ if } t\in\mathbb{N},\\
0 & \text{otherwise}.
\end{cases}$
Then, for a fixed $t \in \mathbb{N}$,
\begin{align*}
\int_{0}^{t}\frac{1}{2^{\tau}}d_{1,1}\tau & =\frac
{1}{2}\int_{0}^{t}\frac{1}{2^{\tau}}\Delta_{1}
\tau+\frac{1}{2}\int_{0}^{t}\frac{1}{2^{\tau}}
\nabla_{1}\tau\\
& =\frac{1}{2}\left(  \sum_{k=0}^{+\infty}f\left(  0+k\right)
-\sum_{k=0}^{+\infty}f\left(  t+k\right)  \right)
+\frac{1}{2}\left(  \sum_{k=0}^{+\infty}f\left(  t-k\right)
-\sum_{k=0}^{+\infty}f\left(  0-k\right)  \right)  \\
&=\frac{1}{2}\left(  1+\frac{1}{2}+\cdots+\frac{1}{2^{t-1}}\right)  +\frac
{1}{2}\left(  \frac{1}{2^{t}}+\frac{1}{2^{t-1}}+\cdots+\frac{1}{2}\right)  \\
&=\frac{1}{2}\frac{1-\frac{1}{2^{t}}}{1-\frac{1}{2}}+\frac{1}{4}\frac
{1-\frac{1}{2^{t}}}{1-\frac{1}{2}}=\frac{3}{2}\left(  1-\frac{1}{2^{t}}\right)
\end{align*}
and $\displaystyle
D_{1,1}\left[s \mapsto \int_{0}^{s}\frac{1}{2^{\tau}}
d_{1,1}\tau\right](t) =\frac{3}{2}D_{1,1}\left[ s \mapsto  1-\frac{1}{2^{s}}\right](t)
=-\frac{3}{2}\frac{\frac{1}{2^{t+1}}-\frac{1}{2^{t-1}}}{2}
=\frac{9}{2^{t+3}}$.
Therefore,
$\displaystyle D_{1,1}\left[s \mapsto \int_{0}^{s}\frac{1}{2^{\tau}}
d_{1,1}\tau\right](t) \neq \frac{1}{2^{t}}$.


\subsection{Mean Value Theorems}
\label{sec:3.3}

We begin by remarking that if $f$ assumes its local maximum at
$t_{0}$, then there exist $\alpha,\beta\in\mathbb{R}_{0}^{+}$
with at least one of them positive, such that
$f\left(  t_{0}+\alpha\right)  \leqslant f\left(  t_{0}\right)$
and
$f\left(  t_{0}\right)  \geqslant f\left(  t_{0}-\beta\right)$.
If $\alpha,\beta\in\mathbb{R}^{+}$, this means that
$\Delta_{\alpha}\left[  f\right]  \left(  t\right)  \leqslant 0$
and $\nabla_{\beta}\left[  f\right]  \left(  t\right)  \geqslant 0$.
Also, we have the corresponding result for a local mimimum. If $f$ assumes its
local minimum at $t_{0}$, then there exist $\alpha,\beta\in\mathbb{R}^{+}$ such that
$\Delta_{\alpha}\left[  f\right]  \left(  t\right)  \geqslant 0$ and
$\nabla_{\beta}\left[  f\right]  \left(  t\right)  \leqslant 0$.

\begin{theorem}[The $\alpha,\beta$-symmetric Fermat theorem for stationary points]
\label{Symmetric Fermat's Theorem}
Let $f:\left[  a,b\right]  \rightarrow\mathbb{R}$
be a continuous function. If $f$ assumes a local extremum at
$t_{0} \in\left]  a,b\right[$, then there exist
two positive real numbers $\alpha$ and $\beta$ such that
$D_{\alpha,\beta}\left[  f\right]  \left(  t_{0}\right)  =0$.
\end{theorem}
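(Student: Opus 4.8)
The plan is to reduce the statement to a one–dimensional intermediate value argument for the map that sends a backward increment $\beta$ to $D_{\alpha,\beta}[f](t_0)$, once a forward increment $\alpha$ has been fixed. Since $D_{\alpha,\beta}[-f](t_0)=-D_{\alpha,\beta}[f](t_0)$ (item 3 of the previous theorem), we may assume that $f$ has a local \emph{maximum} at $t_0$. Choose $\delta>0$ so small that $[t_0-\delta,t_0+\delta]\subseteq\,]a,b[$ and $f(t)\leqslant f(t_0)$ for all $t\in[t_0-\delta,t_0+\delta]$. Then for $0<\alpha,\beta\leqslant\delta$ one has $t_0\in[a,b]_{\beta}^{\alpha}$, the equation $D_{\alpha,\beta}[f](t_0)=0$ is equivalent to $f(t_0+\alpha)=f(t_0-\beta)$, and moreover $\Delta_{\alpha}[f](t_0)\leqslant 0\leqslant\nabla_{\beta}[f](t_0)$, so by Remark~\ref{rem:lc:der} the number $D_{\alpha,\beta}[f](t_0)$ is a convex combination of a nonpositive and a nonnegative quantity. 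The whole point is to choose $\alpha,\beta>0$ making this combination vanish.

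First I would examine the diagonal $\alpha=\beta=s$. The function $s\mapsto D_{s,s}[f](t_0)=(f(t_0+s)-f(t_0-s))/(2s)$ is continuous on $\,]0,\delta]$; if it vanishes somewhere we are done, and otherwise it has a constant sign there. Replacing $f$ by the reflection $x\mapsto f(2t_0-x)$ if necessary — which preserves the local maximum at $t_0$ and changes $D_{\alpha,\beta}[f](t_0)$ into $-D_{\beta,\alpha}[f](t_0)$, hence does not affect the statement — we may assume $f(t_0+s)>f(t_0-s)$ for every $s\in\,]0,\delta]$. Now fix an admissible $\alpha$ and set $\psi(\beta):=f(t_0+\alpha)-f(t_0-\beta)=(\alpha+\beta)D_{\alpha,\beta}[f](t_0)$ for $\beta\in[0,\alpha]$. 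This is continuous with $\psi(0)=f(t_0+\alpha)-f(t_0)\leqslant 0$ and $\psi(\alpha)=f(t_0+\alpha)-f(t_0-\alpha)>0$, so the intermediate value theorem yields $\beta\in\,]0,\alpha]$ with $\psi(\beta)=0$, i.e. $D_{\alpha,\beta}[f](t_0)=0$.

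The step I expect to be the main obstacle is ensuring the increments can be taken \emph{strictly} positive. The intermediate value theorem only produces $\beta\geqslant 0$ with $\psi(\beta)=0$, and one must rule out that $\beta=0$ is forced, which is the case precisely when $f(t_0+\alpha)=f(t_0)$. When $f$ is not identically $f(t_0)$ on $[t_0,t_0+\delta]$ this is easy: choose $\alpha$ with $f(t_0+\alpha)<f(t_0)$, so that $\psi(0)<0<\psi(\alpha)$ and the root lies in the open interval $\,]0,\alpha[$. The delicate situation is the degenerate one in which $f$ is constant and equal to $f(t_0)$ on a whole one–sided neighbourhood of $t_0$; there one has to argue separately, using the constancy on that side to produce positive increments by hand, and I would expect this boundary case — rather than the generic IVT step — to be where the proof needs the most care. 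Assembling the diagonal case, the generic case, and this degenerate case gives the required $\alpha,\beta>0$ with $D_{\alpha,\beta}[f](t_0)=0$.
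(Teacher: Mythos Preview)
Your instinct that the one-sided constant case is where the argument needs the most care is exactly right, but this case cannot be salvaged: the statement as written is actually false there. Take $[a,b]=[-1,1]$, $t_0=0$, and $f(t)=\min\{t,0\}$, so that $f\equiv 0$ on $[0,1]$ and $f(t)=t$ on $[-1,0]$. Then $f$ is continuous with a global maximum at $0$, yet for every admissible pair $0<\alpha,\beta<1$ one has $f(\alpha)=0$ and $f(-\beta)=-\beta<0$, whence $D_{\alpha,\beta}[f](0)=\beta/(\alpha+\beta)>0$. No positive $\alpha,\beta$ make the symmetric difference derivative vanish at $t_0$. The constancy on one side that you hoped to exploit ``by hand'' is precisely what breaks the conclusion: once $f$ is flat on $[t_0,t_0+\delta]$ and strictly below $f(t_0)$ on $(t_0-\delta,t_0)$, the equation $f(t_0+\alpha)=f(t_0-\beta)$ forces $f(t_0-\beta)=f(t_0)$, which is impossible for $\beta\in(0,\delta]$, and outside that neighbourhood the local-maximum hypothesis gives no control on $f$ whatsoever.

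Apart from this, your IVT strategy is the paper's own, just organised differently: the paper fixes one increment at $\gamma=\min\{\alpha_1,\beta_1\}$ and applies the intermediate value theorem on the other side to produce $\rho\in(0,\gamma)$ with $f(t_0+\rho)=f(t_0-\gamma)$. Both arguments break at exactly the same place. The paper writes the strict chain $f(t_0)>f(t_0-\gamma)>f(t_0+\gamma)$, but only $f(t_0)\geqslant f(t_0-\gamma)$ follows from the local maximum, and when equality holds (as in the example above, with the roles of the two sides interchanged) the intermediate value theorem delivers only $\rho=0$. You were more careful than the paper in isolating this boundary case; the gap, however, is genuine and cannot be filled without strengthening the hypothesis (for instance to a \emph{strict} local extremum) or weakening the conclusion (allowing one of $\alpha,\beta$ to vanish).
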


\begin{proof}
We prove the case where $f$ assumes a local maximum at $t_{0}$. Then
there exist $\alpha_{1},\beta_{1}\in\mathbb{R}^{+}$ such that
$\Delta_{\alpha_{1}}\left[  f\right]  \left(  t_{0}\right)  \leqslant 0$
and
$\nabla_{\beta_{1}}\left[  f\right]  \left(  t_{0}\right)  \geqslant 0$.
If $f\left(  t_{0}+\alpha_{1}\right)  =f\left(  t_{0}-\beta_{1}\right)$,
then $D_{\alpha_{1},\beta_{1}}\left[  f\right]  \left(  t_{0}\right)  =0$.
If $f\left(  t_{0}+\alpha_{1}\right)  \neq f\left(  t_{0}-\beta_{1}\right)$,
then let us choose $\gamma=\min\left\{  \alpha_{1},\beta_{1}\right\}$.
Suppose (without loss of generality) that $f\left(  t_{0}-\gamma\right)
>f\left(  t_{0}+\gamma\right)$. Then,
$f\left(  t_{0}\right)  >f\left(  t_{0}-\gamma\right)  >f\left(  t_{0}
+\gamma\right)$
and, since $f$ is continuous, by the intermediate value theorem there
exists $\rho$ such that $0<\rho<\gamma$ and
$f\left(  t_{0}+\rho\right)  =f\left(  t_{0}-\gamma\right)$.
Therefore,
$D_{\rho,\gamma}\left[  f\right]  \left(  t_{0}\right)  =0$.
\end{proof}

\begin{theorem}[The $\alpha,\beta$-symmetric Rolle mean value theorem]
\label{Symmetric Rolle's Mean Value Theorem}
Let $f:\left[  a,b\right] \rightarrow\mathbb{R}$
be a continuous function with $f\left(  a\right)  =f\left(  b\right)$.
Then there exist $\alpha$, $\beta\in\mathbb{R}^{+}$
and $c\in\left]  a,b\right[$ such that
$D_{\alpha,\beta}\left[  f\right]  \left(  c\right)  =0$.
\end{theorem}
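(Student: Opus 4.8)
The plan is to run the classical reduction of Rolle's theorem to Fermat's theorem, using Theorem~\ref{Symmetric Fermat's Theorem} in place of the ordinary Fermat theorem, while keeping an eye on the admissibility condition $c\in I_\beta^\alpha$ and on the degenerate constant case. First I would invoke the Weierstrass extreme value theorem: since $f$ is continuous on the compact interval $\left[a,b\right]$, it attains a maximum value $M$ and a minimum value $m$ on $\left[a,b\right]$.

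If $M=m$, then $f$ is constant on $\left[a,b\right]$, and in that case $D_{\alpha,\beta}\left[f\right]\left(c\right)=0$ for every interior point $c$ and every admissible pair $\alpha,\beta$; it then suffices to take, for instance, $c=\frac{a+b}{2}$ and $\alpha=\beta=\frac{b-a}{2}$, for which $c\in I_\beta^\alpha$ with $I=\left[a,b\right]$. If $M>m$, then since $f\left(a\right)=f\left(b\right)$ at least one of $M$, $m$ differs from the common endpoint value $f\left(a\right)$; indeed, $M=f\left(a\right)=m$ would force $M=m$. Without loss of generality assume $M\neq f\left(a\right)$, so that $M>f\left(a\right)$ and the maximum is attained at some point $t_{0}\in\,\left]a,b\right[$ (the case $m\neq f\left(a\right)$ is entirely analogous, or is obtained by applying the argument to $-f$). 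In particular $f$ has a local maximum at $t_{0}$.

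Now I would apply Theorem~\ref{Symmetric Fermat's Theorem} at $t_{0}$, which yields positive reals $\alpha,\beta$ with $D_{\alpha,\beta}\left[f\right]\left(t_{0}\right)=0$; setting $c=t_{0}$ finishes the argument. The one point requiring care is that this statement must be compatible with the domain restriction, i.e. one needs $t_{0}+\alpha\le b$ and $t_{0}-\beta\ge a$ so that $c\in I_\beta^\alpha$. This is guaranteed because, in the proof of Theorem~\ref{Symmetric Fermat's Theorem}, the constants produced are bounded above by the numbers $\alpha_{1},\beta_{1}$ coming from the local-maximum condition at $t_{0}$, and those may be chosen as small as we wish; choosing them so that $\left[t_{0}-\beta_{1},t_{0}+\alpha_{1}\right]\subseteq\,\left]a,b\right[$ makes the resulting pair admissible. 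I expect this bookkeeping in the last step — reconciling the purely existential conclusion of Fermat's theorem with the interval constraint — together with remembering to dispose of the constant function separately, to be the only genuine obstacle; the rest is the standard textbook reduction.
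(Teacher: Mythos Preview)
Your argument is correct and is exactly the paper's approach: dispose of the constant case, then use Weierstrass to locate an interior extremizer and apply Theorem~\ref{Symmetric Fermat's Theorem}. One small slip in the constant case: with $\alpha=\beta=\frac{b-a}{2}$ the set $I_\beta^\alpha$ is empty (the two excluded closed subintervals cover all of $[a,b]$), so take for instance $\alpha=\beta=\frac{b-a}{4}$ instead.
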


\begin{proof}
If $f=const$, then the result is obvious.
If $f$ is not a constant function, then
there\ exists $t\in\left]a,b\right[$
such that $f\left(  t\right)  \neq f\left(  a\right)$.
Since $f$ is continuous on the compact set $\left[a,b\right]$,
$f$ has an extremum $M=f\left(  c\right)$
with $c\in\left]a,b\right[$. Since $c$ is also a local extremizer, then,
by Theorem~\ref{Symmetric Fermat's Theorem},
there exist $\alpha$,$\beta\in\mathbb{R}^{+}$ such that
$D_{\alpha,\beta}\left[  f\right]  \left(  c\right)  =0$.
\end{proof}

\begin{theorem}[The $\alpha,\beta$-symmetric Lagrange mean value theorem]
Let $f:\left[a,b\right]\rightarrow\mathbb{R}$ be a continuous function.
Then there exist $c\in\left]  a,b\right[$
and $\alpha$,$\beta\in\mathbb{R}^{+}$ such that
$D_{\alpha,\beta}\left[  f\right]  \left(  c\right)
=\frac{f\left(  b\right)-f\left(  a\right)  }{b-a}$.
\end{theorem}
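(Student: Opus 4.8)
The plan is to mimic the classical proof of the Lagrange mean value theorem by reducing it to the $\alpha,\beta$-symmetric Rolle theorem (Theorem~\ref{Symmetric Rolle's Mean Value Theorem}) via an auxiliary function. First I would set
\[
g(t) = f(t) - \frac{f(b)-f(a)}{b-a}\,(t-a),
\]
which is continuous on $[a,b]$ as a sum of a continuous function and a polynomial, and which satisfies $g(a) = f(a) = g(b)$ by a direct check. By Theorem~\ref{Symmetric Rolle's Mean Value Theorem} applied to $g$, there exist $\alpha,\beta\in\mathbb{R}^{+}$ and $c\in\,]a,b[\,$ with $D_{\alpha,\beta}[g](c)=0$.

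Next I would unwind this using linearity of $D_{\alpha,\beta}$ (properties 2 and 3 of the first theorem of \S\ref{sec:3.1}) together with the value of $D_{\alpha,\beta}$ on an affine function. Specifically, for the identity map one computes directly from Definition~\ref{def:s:ab:dd} that $D_{\alpha,\beta}[t\mapsto t](c) = \frac{(c+\alpha)-(c-\beta)}{\alpha+\beta}=1$, hence $D_{\alpha,\beta}[t\mapsto t-a](c)=1$ as well (the constant $-a$ is killed by property~1). Therefore
\[
0 = D_{\alpha,\beta}[g](c) = D_{\alpha,\beta}[f](c) - \frac{f(b)-f(a)}{b-a},
\]
which rearranges to the desired equality $D_{\alpha,\beta}[f](c) = \frac{f(b)-f(a)}{b-a}$.

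The one subtlety to be careful about — and the only place that is not a routine calculation — is that the $\alpha$ and $\beta$ produced by Rolle's theorem depend on $g$, hence on the chord slope, so they are \emph{not} prescribed in advance; this is fine, since the statement only asserts the existence of \emph{some} positive $\alpha,\beta$. One should also check that $c$ lies in the open interval and that $c\in I^{\alpha}_{\beta}$ so that $D_{\alpha,\beta}[f](c)$ is defined there, but both are guaranteed by the corresponding conclusion of Theorem~\ref{Symmetric Rolle's Mean Value Theorem} applied to $g$ (indeed Fermat's theorem, on which Rolle rests, was proved by choosing $\alpha,\beta$ small enough that $t_0\pm$ the shifts stay inside $]a,b[$). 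So the main obstacle is essentially bookkeeping about which parameters are quantified where, not any real analytic difficulty.
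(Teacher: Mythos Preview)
Your proof is correct and follows essentially the same route as the paper: reduce to the $\alpha,\beta$-symmetric Rolle theorem via an auxiliary function obtained by subtracting the secant line, then use linearity to recover the desired identity. The paper's auxiliary function $g(t)=f(a)-f(t)+(t-a)\frac{f(b)-f(a)}{b-a}$ differs from yours only by a sign and an additive constant, and the paper computes $D_{\alpha,\beta}[g]$ directly from the definition rather than invoking the linearity properties, but these are cosmetic differences.
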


\begin{proof}
Let function $g$ be defined on $\left[  a,b\right]$ by
$g\left(  t\right)  =f\left(  a\right)  -f\left(  t\right)  +\left(
t-a\right)  \frac{f\left(  b\right)  -f\left(  a\right)  }{b-a}$.
Clearly, $g$ is continuous on $\left[  a,b\right]$ and
$g\left(  a\right)  =g\left(  b\right)  =0$.
Hence, by Theorem~\ref{Symmetric Rolle's Mean Value Theorem}, there exist
$\alpha$, $\beta\in\mathbb{R}^{+}$ and $c\in\left]  a,b\right[$ such that
$D_{\alpha,\beta}\left[  g\right]  \left(  c\right)  =0$.
Since
\begin{align*}
D_{\alpha,\beta}\left[  g\right]  \left(  t\right)   & =\frac{g\left(
t+\alpha\right)  -g\left(  t-\beta\right)  }{\alpha+\beta}\\
& =\frac{1}{\alpha+\beta}\left(  f\left(  a\right)  -f\left(  t+\alpha\right)
+\left(  t+\alpha-a\right)  \frac{f\left(  b\right)  -f\left(  a\right)
}{b-a}\right) \\
& -\frac{1}{\alpha+\beta}\left(  f\left(  a\right)  -f\left(  t-\beta\right)
+\left(  t-\beta-a\right)  \frac{f\left(  b\right)  -f\left(  a\right)  }
{b-a}\right) \\
& =\frac{1}{\alpha+\beta}\left(  f\left(  t-\beta\right)  -f\left(
t+\alpha\right)  +\left(  \alpha+\beta\right)  \frac{f\left(  b\right)
-f\left(  a\right)  }{b-a}\right) \\
& =\frac{f\left(  b\right)  -f\left(  a\right)  }{b-a}-D_{\alpha,\beta}\left[
f\right]  \left(  t\right),
\end{align*}
we conclude that
$D_{\alpha,\beta}\left[  f\right]  \left(  c\right)  =\frac{f\left(  b\right)
-f\left(  a\right)  }{b-a}$.
\end{proof}

\begin{theorem}[The $\alpha,\beta$-symmetric Cauchy mean value theorem]
Let $f,g:\left[a,b\right] \rightarrow\mathbb{R}$ be continuous functions.
Suppose that $D_{\alpha,\beta}\left[  g\right]\left(  t\right)  \neq 0$
for all $t\in\left]  a,b\right[$ and all $\alpha$, $\beta\in\mathbb{R}^{+}$.
Then there exists $\bar{\alpha},\bar{\beta}\in\mathbb{R}^{+}$
and $c\in\left]  a,b\right[$ such that
$\frac{f\left(  b\right)  -f\left(  a\right)  }{g\left(  b\right)  -g\left(
a\right)  }=\frac{D_{\bar{\alpha},\bar{\beta}}\left[  f\right]  \left(
c\right)  }{D_{\bar{\alpha},\bar{\beta}}\left[  g\right]  \left(  c\right)}$.
\end{theorem}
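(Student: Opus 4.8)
The plan is to mimic the classical proof of Cauchy's mean value theorem, replacing the appeal to the classical Rolle theorem by the $\alpha,\beta$-symmetric Rolle theorem proved above, and to exploit the linearity of $D_{\alpha,\beta}$ together with the hypothesis on $g$.

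First I would check that the left-hand side is well defined, i.e. that $g(b)\neq g(a)$. This follows immediately from the hypothesis: if $g(a)=g(b)$ held, the $\alpha,\beta$-symmetric Rolle mean value theorem would produce $\alpha,\beta\in\mathbb{R}^{+}$ and $c\in\left]a,b\right[$ with $D_{\alpha,\beta}\left[g\right]\left(c\right)=0$, contradicting the assumption that $D_{\alpha,\beta}\left[g\right]$ never vanishes on $\left]a,b\right[$. Hence the quotient $\frac{f(b)-f(a)}{g(b)-g(a)}$ makes sense.

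Next I would introduce the auxiliary function
\[
h\left(t\right)=f\left(t\right)-f\left(a\right)-\frac{f\left(b\right)-f\left(a\right)}{g\left(b\right)-g\left(a\right)}\bigl(g\left(t\right)-g\left(a\right)\bigr),
\]
which is continuous on $\left[a,b\right]$ as a linear combination of continuous functions, and which satisfies $h\left(a\right)=h\left(b\right)=0$ by direct substitution. Applying the $\alpha,\beta$-symmetric Rolle mean value theorem to $h$ yields $\bar{\alpha},\bar{\beta}\in\mathbb{R}^{+}$ and $c\in\left]a,b\right[$ with $D_{\bar{\alpha},\bar{\beta}}\left[h\right]\left(c\right)=0$. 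Using the linearity of $D_{\alpha,\beta}$ (items~2 and~3 of the properties theorem) and the fact that it annihilates constants (item~1), this reads
\[
D_{\bar{\alpha},\bar{\beta}}\left[f\right]\left(c\right)-\frac{f\left(b\right)-f\left(a\right)}{g\left(b\right)-g\left(a\right)}D_{\bar{\alpha},\bar{\beta}}\left[g\right]\left(c\right)=0 .
\]
Since $D_{\bar{\alpha},\bar{\beta}}\left[g\right]\left(c\right)\neq0$ by hypothesis, dividing through by it gives the claimed identity.

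There is no serious obstacle here; the only points that require care are (i) using the hypothesis on $g$ to rule out $g(a)=g(b)$ \emph{before} writing the quotient, and (ii) observing that, because the symmetric Rolle theorem is applied to the single function $h$, the \emph{same} pair $(\bar{\alpha},\bar{\beta})$ occurs in the numerator and in the denominator of the final ratio. This is precisely what makes the quotient collapse correctly, and it is also the reason the hypothesis on $g$ is imposed uniformly over all $\alpha,\beta\in\mathbb{R}^{+}$ rather than for a single fixed pair.
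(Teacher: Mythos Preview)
Your proof is correct and follows essentially the same route as the paper: you argue $g(a)\neq g(b)$ via the symmetric Rolle theorem, build the same auxiliary function (the paper calls it $F$), apply Rolle to it, and then use linearity of $D_{\alpha,\beta}$ to unwind. Your additional remarks about why the same pair $(\bar{\alpha},\bar{\beta})$ appears in numerator and denominator, and why the hypothesis on $g$ must be uniform in $\alpha,\beta$, are helpful clarifications that the paper leaves implicit.
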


\begin{proof}
From condition $D_{\alpha,\beta}\left[  g\right]  \left(  c\right)  \neq 0$
and the $\alpha,\beta$-symmetric Rolle mean value theorem
(Theorem~\ref{Symmetric Rolle's Mean Value Theorem}), it follows that
$g\left(  b\right)  \neq g\left(  a\right)$.
Let us consider function $F$ defined on $\left[a,b\right]$ by
$F\left(  t\right)  =f\left(  t\right)  -f\left(  a\right)  -\frac{f\left(
b\right)  -f\left(  a\right)  }{g\left(  b\right)  -g\left(  a\right)
}\left[  g\left(  t\right)  -g\left(  a\right)  \right]$.
Clearly, $F$ is continuous on $\left[  a,b\right]$
and $F\left(  a\right)  =F\left(b\right)$. Applying the
$\alpha,\beta$-symmetric Rolle mean value theorem to the
function $F$, we conclude that exist
$\bar{\alpha},\bar{\beta}\in\mathbb{R}^{+}$ and $c\in\left]a,b\right[$
such that
\begin{equation*}
0 = D_{\bar{\alpha},\bar{\beta}}\left[  F\right]  \left(  c\right)\\
=D_{\bar{\alpha},\bar{\beta}}\left[  f\right]  \left(  c\right)
-\frac{f\left(  b\right)-f\left(  a\right)}{g\left(  b\right)
-g\left(a\right)}D_{\bar{\alpha},\bar{\beta}}\left[g\right]\left(c\right),
\end{equation*}
proving the intended result.
\end{proof}


\begin{acknowledgement}
This work was supported by {\it FEDER} funds through
{\it COMPETE} --- Operational Programme Factors of Competitiveness
(``Programa Operacional Factores de Competitividade'')
and by Portuguese funds through the
{\it Center for Research and Development
in Mathematics and Applications} (University of Aveiro)
and the Portuguese Foundation for Science and Technology
(``FCT --- Funda\c{c}\~{a}o para a Ci\^{e}ncia e a Tecnologia''),
within project PEst-C/MAT/UI4106/2011
with COMPETE number FCOMP-01-0124-FEDER-022690.
Brito da Cruz was also supported by FCT through
the Ph.D. fellowship SFRH/BD/33634/2009.
\end{acknowledgement}



\end{document}